\documentclass[12pt,a4paper,reqno]{amsart}

\usepackage[a4paper,inner=1.26in,outer=1.26in,top=1.4in,bottom=1in]{geometry}
\usepackage{amsmath,amssymb,amsfonts}
\usepackage{graphicx,hyperref}
\usepackage{xcolor}

\numberwithin{equation}{section}

\newtheorem{theorem}{Theorem}[section]
\newtheorem{proposition}[theorem]{Proposition}
\newtheorem{lemma}[theorem]{Lemma}

\theoremstyle{definition}

\theoremstyle{remark}
\newtheorem*{remark}{Remark}

\newcommand{\R}{\mathbb R}

\newcommand{\dd}{\mathrm d}

\title[Examples of Noncompact Quasinilpotent Hankel Operators]
      {Examples of Noncompact Quasinilpotent Hankel Operators}

\author{Yuanqi Sang}
\address{School of Mathematics, Southwestern University of Finance and Economics,
  Chengdu, 611130, China}
\email{sangyq@swufe.edu.cn}

\date{}
\subjclass[2020]{47B35}
\keywords{Hankel operators, quasinilpotent operator, noncompact operator}

\begin{document}

\begin{abstract}
For every nonzero real number $a,$ we prove that the integral Hankel operator on $L^2(\mathbb R_{+})$ with kernel
$k(t)=t^{-1+ia}$ is noncompact quasinilpotent.  
This answers in the affirmative a question by Peller \cite{Peller1998} about the existence of noncompact quasinilpotent Hankel operators.
Moreover, we obtain the corresponding Hankel operators on the Hardy space with 
symbols $\left((1-z)/(1+z)\right)^{ia}.$ 
\end{abstract}

\maketitle

\section{Introduction}

Let $\mathbb T$ denote the unit circle in $\mathbb C$.
The Hardy space $H^{2}(\mathbb T)$
is the closed subspace of $L^2(\mathbb T)$ consisting
of functions whose negative Fourier coefficients vanish.
Let $P_{+}$ be the orthogonal projection from $L^2(\mathbb T)$ onto $H^{2}(\mathbb T).$ 
For $f\in L^{\infty},$ the space of essentially bounded measurable functions on $\mathbb{T}$, 
the Hankel operator
$H_{f}$ is defined by:
\begin{eqnarray*}
H_{f}:& x\in H^{2}\longmapsto P_{+}(f (Jx))\in H^{2},
\end{eqnarray*}
where $J$ is the flip operator on $L^{2}(\mathbb T)$ defined by
$Jf(z)=\bar{z}f(\bar{z}).$
With respect to the standard basis $\{z^{n}\}^{\infty}_{n=0}$ of the Hardy space $H^{2}(\mathbb T)$, 
the Hankel operator $H_f$ has the matrix representation
\begin{align}\label{THm}
\begin{pmatrix}
f_1 & f_2 & f_3 & f_4 & \cdots\\
f_2 & f_3 & f_4 & f_5 & \cdots\\
f_3 & f_4 & f_5 & f_6 & \cdots\\
f_4 & f_5 & f_6 & f_7 & \cdots\\
\vdots & \vdots & \vdots & \vdots & \ddots
\end{pmatrix},
\end{align}
where $f_n$ is the $n$-th Fourier coefficient of the symbol $f$.
In particular,
$H_{f}=0$ if and only if $f\in\overline{H^{2}(\mathbb T)}.$

The integral Hankel operator $\boldsymbol{H}_{k}$ with kernel $h$
on $L^2(\mathbb R_{+})$ is defined by
\begin{align*}
(\boldsymbol{H}_{k}f)(x)=\int_{\mathbb R_{+}}k(x+y)f(y)dy.
\end{align*}
In general, $k$ may be a distribution.
If $\boldsymbol{H}_{k}$ is bounded, then $\boldsymbol{H}_{k}$ is  unitarily equivalent  to  a Hankel operator on $H^2(\mathbb T)$
(see \cite[Theorem 8.9]{Peller2003}).

An operator $T$ on a Hilbert space $H$ is called nilpotent if there exists a positive integer $n$ such that $T^n=0.$
It is called quasinilpotent if its spectrum $\sigma(T)$ equals $\{0\}.$

Power \cite{Power1984} proved that there does not exist a 
nonzero nilpotent Hankel operator
and asked whether there exists a nonzero quasinilpotent Hankel operator.
The question was answered affirmatively by Megretskii \cite{Megretskii1990}, 
who constructed a nonzero compact quasinilpotent
Hankel operator. Peller \cite{Peller1998} asked whether there exist noncompact quasinilpotent Hankel operators.
We give examples of noncompact quasinilpotent Hankel operators in 
Theorem \ref{main1} and Theorem \ref{main2}. We summarize our results as follows.

\noindent\textbf{Main results.}
Let $a$ be a nonzero real number. Define $k(t)=t^{-1+ia}$ and 
$\psi(z)=\left(\frac{1-z}{1+z}\right)^{ia}.$ The operators
$\boldsymbol{H}_k$ and $H_{\psi}$ are noncompact quasinilpotent on $L^2(\mathbb R_{+})$ 
and $H^{2}(\mathbb T),$ respectively.

Recall the unitary Mellin transform $\mathcal M$ (see \cite[p.166]{Nikolski2002}) defined by
\begin{align}\label{mellin}
 \mathcal M:L^2(\mathbb R_+)\longrightarrow L^2(\R),
 \qquad
 (\mathcal M f)(s)=\frac{1}{\sqrt{2\pi}}
 \int_{\mathbb R_{+}} x^{-1/2-is}f(x)\,dx.
\end{align}
Moreover,
\begin{align}\label{inversemellin}
 \mathcal M^{-1}:L^2(\mathbb R)\longrightarrow L^2(\mathbb R_{+}),
 \qquad
 (\mathcal M^{-1} g)(s)=\frac{1}{\sqrt{2\pi}}
 \int_{\mathbb R} s^{-1/2+it}g(t)\,dt.
\end{align}
Define $(\boldsymbol{M}f)(x)=
\frac{\Gamma(1/2-i x)}{|\Gamma(1/2-i x)|}(\mathcal Mf)(-x).$
For an integral Hankel operator $\boldsymbol{H}_{k},$ let $A=\boldsymbol{M}\boldsymbol{H}_{k}\boldsymbol{M}^{-1}.$
Yafaev \cite{Yafaev2019} obtained the following formula:
\begin{align*}
A=v(X)s(D)v(X),
\end{align*}
where $(Xu)(x)=xu(x), D=-i\frac{d}{dx},v(x)=\frac{\sqrt{\pi}}{\sqrt{\cosh(\pi x)}}$ 
and $s$ is the sign-function of $\boldsymbol{H}.$
The formula suggests considering weighted composition operators on $L^{2}(\mathbb R).$

We find a class of noncompact quasinilpotent weighted composition operators on $L^{2}(\mathbb R).$
Under the Mellin transform, these operators are unitarily equivalent to a class of integral Hankel operators.
These integral Hankel operators can be viewed as generalized Carleman operators. Finally, we calculate the symbols 
of the corresponding Hankel operators on the Hardy space $H^{2}(\mathbb T).$

\section{Proof of the Main Results}
For $\phi\in C_{0}(\mathbb R)\cap L^1(\mathbb R)$ and $a\in \mathbb{R}\setminus\{0\}$,
define the operator $W_{\phi,a}: L^2(\mathbb R)\to L^2(\mathbb R)$ 
by
\begin{align}
(W_{\phi,a}f)(x)=\phi(x)f(x-a)\qquad f\in L^2(\mathbb R).
\end{align}
\begin{proposition}\label{w}
If $\phi\not\equiv 0,$ then
the operator $W_{\phi,a}$ is not compact and $\sigma(W_{\phi,a})=\{0\}.$
\end{proposition}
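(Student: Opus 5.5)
Write $S_a$ for the translation $(S_af)(x)=f(x-a)$, a unitary on $L^2(\mathbb R)$, and $M_\phi$ for multiplication by $\phi$. Then $W_{\phi,a}=M_\phi S_a$, and $\|W_{\phi,a}\|\le\|\phi\|_\infty$. The plan is to compute the powers of $W_{\phi,a}$ explicitly, get quasinilpotency from the spectral radius formula, and get noncompactness by exhibiting a bounded sequence converging weakly to zero whose images do not tend to zero in norm.

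\textbf{Quasinilpotency.} An induction gives
\begin{align*}
(W_{\phi,a}^n f)(x)=\Phi_n(x)\,f(x-na),\qquad \Phi_n(x)=\prod_{j=0}^{n-1}\phi(x-ja),
\end{align*}
so $\|W_{\phi,a}^n\|=\|\Phi_n\|_\infty$. Since $\phi\in C_0(\mathbb R)$, for each $\varepsilon>0$ there is $R$ with $|\phi(y)|<\varepsilon$ for $|y|>R$. The points $x,x-a,\dots,x-(n-1)a$ are spaced $|a|$ apart, so at most $2R/|a|+1=:K$ of them lie in $[-R,R]$. Hence
\begin{align*}
|\Phi_n(x)|\le \|\phi\|_\infty^{K}\,\varepsilon^{\,n-K}\qquad\text{for all }x\in\mathbb R,\ n>K.
\end{align*}
Taking $n$-th roots gives $\limsup_n\|W_{\phi,a}^n\|^{1/n}\le\varepsilon$. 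As $\varepsilon$ is arbitrary, the spectral radius is $0$, so $\sigma(W_{\phi,a})=\{0\}$, since the spectrum is nonempty. Uniform control of the product is the only real point here, and the counting argument handles it. The hypothesis $\phi\in L^1$ is not needed for this part; it presumably matters later for the Mellin and Hankel correspondence.

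\textbf{Noncompactness.} Since $\phi$ is continuous and $\phi\not\equiv0$, choose $x_0$ and $\delta>0$ with $|\phi|\ge c>0$ on $I=[x_0-\delta,x_0+\delta]$. Let $e_n$ be an orthonormal sequence in $L^2(I-a)$, for example normalized exponentials $e^{2\pi i n x/(2\delta)}$ restricted to $I-a$. Then $e_n\to0$ weakly. Moreover $S_ae_n$ is supported in $I$, so
\begin{align*}
\|W_{\phi,a}e_n\|^2=\int_I|\phi(x)|^2|e_n(x-a)|^2\,dx\ge c^2 .
\end{align*}
A compact operator maps weakly null sequences to norm null sequences, so $W_{\phi,a}$ is not compact.

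An equivalent way to see noncompactness: $M_{\phi}$ restricted to $L^2(I)$ is bounded below, so its range on that infinite-dimensional subspace is closed and infinite-dimensional, which is impossible for a compact operator. Composing with the unitary $S_a$ does not affect this.

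Neither step poses a serious obstacle. The main care is in the uniform bound on $\Phi_n$, which is where $\phi\in C_0$ and $a\neq0$ are both essential. If $a=0$, then $W_{\phi,0}=M_\phi$ has spectrum equal to the closure of the range of $\phi$.
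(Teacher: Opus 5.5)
Your proof is correct and follows the paper's argument. For quasinilpotency, the paper also uses the explicit formula $W_{\phi,a}^n f(x)=f(x-na)\prod_{k=0}^{n-1}\phi(x-ka)$ and counts how many of the points $x,x-a,\dots,x-(n-1)a$ can lie where $|\phi|\ge\varepsilon$, which gives $r(W_{\phi,a})=0$ exactly as you do. For noncompactness, the paper likewise uses normalized functions supported in the $(-a)$-translate of a region where $|\phi|$ is bounded below (indicators of disjoint pieces of $\{|\phi|\ge\varepsilon\}$); it concludes from the orthogonality of the images rather than from weak nullity, which is only a cosmetic difference.
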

\begin{proof}
Suppose that $\phi\in C_{0}(\mathbb R).$ 
For $\varepsilon \in (0,\|\phi\|_{\infty}),$ the set
\begin{align}
E_{\varepsilon}=\{x\in \mathbb R:|\phi(x)|\geqslant \varepsilon\}
\end{align}
is compact and has positive measure.
We can divide $E_\varepsilon$ into
countably many pairwise disjoint  measurable sets $I_n,$ each of positive measure.
Let $h_n={\chi_{I_n -a}}/{|I_n|^{\frac{1}{2}}}.$ We have $\|h_n\|=1.$
A direct calculation yields $W_{\phi,a}h_n=|I_n|^{-\frac{1}{2}}\chi_{I_n}\phi.$ 
Thus $\|W_{\phi,a}h_n\|\geqslant \varepsilon.$ If $n\neq m,$ then 
\begin{align*}
\langle W_{\phi,a}h_n,W_{\phi,a}h_m \rangle=0.
\end{align*}
It follows that 
\begin{align*}
\|W_{\phi,a}h_n-W_{\phi,a}h_m\|^2=\|W_{\phi,a}h_n\|^2+\|W_{\phi,a}h_m\|^2\geqslant 2\varepsilon^2.
\end{align*}
Therefore, $W_{\phi,a}$ is not compact.

A direct calculation yields
\begin{align*}
(W^{n}_{\phi,a}f)(x)=f(x-na)\prod_{k=0}^{n-1}\phi(x-ka).
\end{align*}
Since translations are unitary operators on $L^2(\mathbb R),$ we have 
\begin{align}\label{norm}
\|W^{n}_{\phi,a}\|=\mathop{\mathrm{ess\,sup}}_{x\in\mathbb R}\prod_{k=0}^{n-1}|\phi(x-ka)|.
\end{align}
Let $N_{\varepsilon}=1+\lfloor \frac{diam (E_\varepsilon)}{|a|}\rfloor.$ For every $x\in \mathbb R,$
at most $N_{\varepsilon}$ of the points $x,x-a,\cdots,x-(n-1)a$ lie in $E_{\varepsilon}.$ For
$n>N_{\varepsilon},$ we have
$\prod_{k=0}^{n-1}|\phi(x-ka)|\leqslant \|\phi\|_{\infty}^{N_\varepsilon}\varepsilon^{n-N_\varepsilon}.$
Therefore,
\begin{align*}
\|W^{n}_{\phi,a}\|^{\frac{1}{n}}\leqslant (\frac{\|\phi\|_{\infty}}{\varepsilon})^{\frac{N_\varepsilon}{n}}\varepsilon\to \varepsilon\qquad \text{as} \ n\to\infty.
\end{align*}
Since $\varepsilon$ is arbitrary in $(0,\|\phi\|_{\infty}),$ $r(W_{\phi,a})=0.$
\end{proof}

For $r>0,$ define
\begin{align*}
(V_{r}f)(x)=\sqrt{r}f(rx),\qquad f\in L^2(\mathbb R_{+}).
\end{align*}
and 
$V^{-1}_{r}=V_{r^{-1}}.$
Define
\begin{align*}
(\boldsymbol{H}f)(x)
= &(\mathcal M^{-1}W_{\phi,a}\mathcal M f)(x),\qquad x>0.
\end{align*}
We denote by $C_c^\infty(\mathbb R_+)$ the space of all infinitely
differentiable functions with compact support in $\mathbb R_+$. Recall that
$C_c^\infty(\mathbb R_+)$ is dense in $L^2(\mathbb R_+)$.
\begin{theorem}\label{main1}
Let $\phi\in C_{0}(\mathbb R)\cap L^1(\mathbb R)$ and $a\in \mathbb R\setminus\{0\}.$ 
If
$\boldsymbol{H}$ is an integral Hankel operator, then
\begin{align}
(\boldsymbol{H}f)(x)=c\int_{\mathbb R_{+}}(x+y)^{-1+ia}f(y)dy \qquad f\in L^{2}(\mathbb R_{+}),
\end{align}
where $c$ is constant.
In this case,
\begin{align*}
\phi(s)=\frac{c\Gamma(1/2-is)\Gamma(1/2+i(s-a))}{\Gamma(1-ia)}\qquad s>0.
\end{align*} 
\end{theorem}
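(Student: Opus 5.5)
The plan is to use two commutation relations with the dilations $V_r$, one coming from being a Hankel operator and one coming from being a weighted shift in Mellin space, and to let them force the kernel to be a power function.

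\textbf{Step 1: $V_r$ in Mellin variables.} A change of variables gives
\begin{align*}
(\mathcal M V_r f)(s)=r^{is}(\mathcal M f)(s),
\end{align*}
so $\mathcal M V_r\mathcal M^{-1}$ is multiplication by $r^{is}$. For multiplication and translation we have $r^{is}\phi(s)f(s-a)=r^{ia}\,\phi(s)\,r^{i(s-a)}f(s-a)$. Hence
\begin{align*}
V_r\boldsymbol{H}=r^{ia}\,\boldsymbol{H}V_r \qquad \text{for all } r>0.
\end{align*}

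\textbf{Step 2: the same relation for a Hankel operator.} Suppose $\boldsymbol{H}=\boldsymbol{H}_k$, with $k$ a distribution on $\mathbb R_+$. Substituting $y\mapsto y/r$ gives
\begin{align*}
V_r\boldsymbol{H}_kV_r^{-1}=\boldsymbol{H}_{k_r},\qquad k_r(t)=r\,k(rt).
\end{align*}
Step 1 then says $\boldsymbol{H}_{k_r}=r^{ia}\boldsymbol{H}_k$. A Hankel operator determines its kernel: test against $f,g\in C_c^\infty(\mathbb R_+)$ to get $\langle \boldsymbol{H}_k f,g\rangle=\langle k, f*\tilde g\rangle$, and such convolutions span a dense class of test functions on $(0,\infty)$. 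So $r\,k(rt)=r^{ia}k(t)$ as distributions, for every $r>0$.

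\textbf{Step 3: solving the homogeneity equation.} Differentiating in $r$ at $r=1$ gives the Euler equation $t k'(t)=(-1+ia)k$. Its only distributional solutions on $(0,\infty)$ are $k(t)=c\,t^{-1+ia}$; to see this, note that $(t^{1-ia}k)'=0$. This step is the main obstacle, because it requires the distributional uniqueness argument in Step 2. The fallback is to work directly with the bilinear form on $C_c^\infty$ functions.

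\textbf{Step 4: computing $\phi$.} With $k(t)=c\,t^{-1+ia}$, apply $\boldsymbol{H}$ to the generalized eigenfunctions $y^{-1/2+is}$, equivalently compute the Mellin transform of the kernel. The Beta integral gives
\begin{align*}
\int_0^\infty (x+y)^{-1+ia}y^{-1/2+i\sigma}\,dy
= x^{-1/2+i(\sigma+a)}\,\frac{\Gamma(1/2+i\sigma)\,\Gamma(1/2-i\sigma-ia)}{\Gamma(1-ia)}.
\end{align*}
So $\boldsymbol{H}$ sends $y^{-1/2+i\sigma}$ to a multiple of $x^{-1/2+i(\sigma+a)}$. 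Matching this with $\mathcal M^{-1}W_{\phi,a}\mathcal M$, keeping track of the sign convention $x^{-1/2-is}$ in \eqref{mellin} and of the shift $s=\sigma+a$ (or its negative), yields
\begin{align*}
\phi(s)=\frac{c\,\Gamma(1/2-is)\,\Gamma(1/2+i(s-a))}{\Gamma(1-ia)}.
\end{align*}
To justify this rigorously, pair $\boldsymbol{H}f$ with test functions and use Fubini on $C_c^\infty$, where all the integrals converge absolutely because $\operatorname{Re}$ of the exponents is $-1/2$. Finally, Stirling's formula shows that this $\phi$ lies in $C_0\cap L^1$, since $|\phi(s)|\sim C e^{-\pi|s|}$ up to a constant, which confirms consistency with the hypotheses.
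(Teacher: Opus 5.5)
Your proposal is correct and follows essentially the same route as the paper: you read off the dilation relation $V_r\boldsymbol H=r^{ia}\boldsymbol H V_r$ (equivalently $\boldsymbol H V_r=r^{-ia}V_r\boldsymbol H$) in Mellin variables, deduce the homogeneity $r^{1-ia}k(rt)=k(t)$ of the kernel, and then identify $\phi$ with a Beta integral. Your distributional treatment of the homogeneity (uniqueness of the kernel plus the Euler equation, instead of the paper's pointwise substitution $g(t)=e^{(1-ia)t}k(e^t)$) is slightly more careful, and the sign you hedge on resolves as $s=\sigma+a$, because $\mathcal M^{-1}$ sends a point mass at $t$ to $(2\pi)^{-1/2}x^{-1/2+it}$ while $W_{\phi,a}$ sends a point mass at $\sigma$ to $\phi(\sigma+a)$ times a point mass at $\sigma+a$; this gives exactly the stated formula for $\phi$.
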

\begin{proof}
Assume that $\boldsymbol{H}$ is an integral Hankel operator 
such that 
\begin{align*}
(\boldsymbol{H}f)(x)=\int_{\mathbb R_{+}}k(x+y)f(y)dy.
\end{align*}
For $h\in C_c^\infty(\mathbb R_+),$ we have
\begin{equation}\label{Vr}
\begin{aligned}
(\mathcal M V_{r}h)(x)&=\frac{1}{\sqrt{2\pi}}\int_{\mathbb R_{+}}s^{-\frac{1}{2}-ix}\sqrt{r}h(rs)ds\\
&=\frac{r^{ix}}{\sqrt{2\pi}}\int_{\mathbb R_{+}}y^{-\frac{1}{2}-ix}h(y)dy\\
&=r^{ix}(\mathcal{M}h)(x).
\end{aligned}
\end{equation}
Using \eqref{Vr}, we have
\begin{equation*}
\begin{aligned}
(\mathcal M \boldsymbol{H}V_{r}h)(x)&=(W_{\phi,a}\mathcal M V_{r}h)(x)\\
&=\phi(x)r^{i(x-a)}(\mathcal Mh)(x-a)\\
&=r^{i(x-a)}(W_{\phi,a}\mathcal M h)(x)
\end{aligned}
\end{equation*}
and 
\begin{align*}
(\mathcal M V_{r}\boldsymbol{H} h)(x)=r^{ix}(\mathcal M \boldsymbol{H}h)(x)
=r^{ix}(W_{\phi,a}\mathcal M h)(x)
\end{align*}
Since $\mathcal M$ is unitary and $C_c^\infty(\mathbb R_+)$ is dense in $L^2(\mathbb R_+),$
\begin{align}\label{VH}
\boldsymbol{H}V_{r}=r^{-ia}V_{r}\boldsymbol{H}.
\end{align}
A direct calculation gives
\begin{equation}\label{Vr-1}
\begin{aligned}
(V_{r^{-1}}\boldsymbol{H}V_{r}h)(x)
&=r^{-\frac{1}{2}}\int_{\mathbb R_{+}}k(r^{-1}x+y)\sqrt{r}h(ry)dy\\
&=\int_{\mathbb R_{+}}k(r^{-1}x+y)h(ry)dy\\
&=\int_{\mathbb R_{+}}r^{-1}k(r^{-1}(x+t))h(t)dt.
\end{aligned}
\end{equation}
Combining \eqref{VH} and \eqref{Vr-1}, we have
$r^{1-ia}k(rx)=k(x).$
Let $r=e^b$ and $x=e^{t}.$ We have
\[e^{(1-ia)(b+t)k(e^{b+t})}=e^{(1-ia)t}k(e^t).\] Let $g(t)=e^{(1-ia)t}k(e^t).$ We have
$g(b+t)=g(t),\forall \ b,t>0.$  Therefore, $g$ is constant.
It follows that 
\[k(t)=ct^{-1+ia},\]
where $c$ is constant. Therefore,
\begin{align*}
(\boldsymbol{H}h)(x)=c\int_{\mathbb R_{+}}(x+y)^{-1+ia}h(y)dy.
\end{align*}
Using the identity
$\int_0^\infty\frac{y^{-1/2}}{x+y}\dd y=\pi x^{-1/2},$ we have
\begin{equation*}
 \int_0^\infty\!\int_0^\infty
 \frac{x^{-1/2}|h(y)|}{x+y}\dd y\dd x
 =\pi\int_0^\infty y^{-1/2}|h(y)|\dd y<\infty
\end{equation*}
Fubini's theorem implies that
\begin{align*}
(\mathcal M \boldsymbol{H}h)(s)
&=\frac{c}{\sqrt{2\pi}}\int_{\mathbb R_{+}}x^{-1/2-is}\left(\int_{\mathbb R_{+}}(x+y)^{-1+ia}h(y)dy\right)dx\\
&=\frac{c}{\sqrt{2\pi}}\int_{\mathbb R_{+}}h(y)\left(\int_{\mathbb R_{+}}(x+y)^{-1+ia}x^{-1/2-is}dx\right)dy\\
&\overset{x=ty}{=}c\frac{1}{\sqrt{2\pi}}\int_{\mathbb R_{+}}y^{-1/2-i(s-a)}h(y)dy\int_{\mathbb R_{+}}t^{-1/2-is}(1+t)^{-1+ia}dt\\
&=\frac{c\Gamma(1/2-is)\Gamma(1/2+i(s-a))}{\Gamma(1-ia)}(\mathcal M h)(s-a).
\end{align*}
By the definition of $\boldsymbol{H}
= \mathcal M^{-1}W_{\phi,a}\mathcal M,$
it follows that \[\phi(s)=\frac{c\Gamma(1/2-is)\Gamma(1/2+i(s-a))}{\Gamma(1-ia)}.\]
Using Euler's reflection formula \[\Gamma(z)\Gamma(1-z)=\frac{\pi}{\sin(\pi z)}, \qquad z\notin \mathbb Z,\]
we have
\[|\Gamma(1/2-is)|^2=\frac{\pi}{\sin(\pi(\frac{1}{2}-is))}=\frac{\pi}{\sin\frac{\pi}{2}\cos(ib\pi)}=\frac{\pi}{\cosh (\pi s)}.\]
Similarly, $|\Gamma(1/2+i(s-a))|^2=\frac{\pi}{\cosh \pi (s-a)}.$
Hence,
\[
|\phi(s)|=
\frac{|c|\pi}{|\Gamma(1-ia)|\sqrt{\cosh(\pi s)\cosh(\pi(s-a))}}\in L^1(\mathbb R),
\]
and $\phi\in C_{0}(\mathbb R).$ By \eqref{norm}, 
the operator $\boldsymbol{H}$ is bounded and $\|\boldsymbol{H}\|=\|\phi\|_{\infty}.$
\end{proof}
Define 
\[(\boldsymbol{H}_{a}f)(x)=\int_{\mathbb R_{+}}(x+y)^{-1+ia}f(y)dy.\]
\begin{remark}
The kernel of $\boldsymbol{H}_{a}$ is $k(t)=t^{-1+ia}.$
\begin{itemize}
  \item When $a=0,$ $\boldsymbol{H}_{0}$ is the Carleman operator. It is not compact and its spectrum is $[0,\pi]$ (see e.g. \cite[Section 10.2]{Peller2003}).
  \item Pushnitski and Sobolev \cite{PushnitskiSobolev2025} study a class of self-adjoint Hankel operators involving the 
  kernel function $h(t)=\frac{e^{i\omega\ell\log t}}{t}.$ $k$ and $h$ are functions of the same type.
\end{itemize}
\end{remark}

Recall that the Laguerre functions
\begin{align}
\ell_n(x)=\frac{e^{x/2}}{n!}\frac{d^n}{dx^n}\bigl(e^{-x}x^n\bigr)
\end{align}
form an orthonormal basis of $L^2(\mathbb R_{+})$ (see \cite[(5.1.1) and p108]{Szego1975}).
The following formula is useful in the proof of Theorem \ref{main2}.
\begin{lemma}\cite[p.217]{Rainville1960}
For all nonnegative integers $n,m$,
\begin{align}
(\ell_n\ast \ell_m)(t)=\ell_{n+m}(t)-\ell_{n+m+1}(t).
\end{align}
\end{lemma}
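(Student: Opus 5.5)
The plan is to prove the identity with the Laplace transform, which turns the convolution on $\mathbb R_+$, $(\ell_n\ast\ell_m)(t)=\int_0^t\ell_n(s)\ell_m(t-s)\,ds$, into a product. All functions involved are continuous on $[0,\infty)$ and of exponential order. By the uniqueness theorem for the Laplace transform, it then suffices to check that both sides have the same transform for $\operatorname{Re}p$ large.

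\emph{Step 1: the Laplace transform of $\ell_n$.} Write $q=p+\tfrac12$. Then
\[
\widehat{\ell_n}(p)=\int_0^\infty e^{-pt}\ell_n(t)\,dt=\frac1{n!}\int_0^\infty e^{-(q-1)t}\,\frac{d^n}{dt^n}\bigl(e^{-t}t^n\bigr)\,dt .
\]
I would integrate by parts $n$ times. The boundary terms vanish:
\begin{itemize}
\item at $t=0$, because $\frac{d^k}{dt^k}(e^{-t}t^n)$ vanishes there for $k<n$ (each term still contains a factor $t^{n-k}$);
\item at $\infty$, because of exponential decay once $\operatorname{Re}q>1$.
\end{itemize}
Each integration by parts moves one derivative onto $e^{-(q-1)t}$ and produces a factor $(q-1)$; the $(-1)$ from the derivative cancels the $(-1)$ from the integration by parts. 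This gives
\[
\widehat{\ell_n}(p)=\frac{(q-1)^n}{n!}\int_0^\infty e^{-qt}t^n\,dt=\frac{(q-1)^n}{q^{n+1}}=\frac{(p-\tfrac12)^n}{(p+\tfrac12)^{n+1}}.
\]
An alternative route is the generating function of the Laguerre polynomials, but the integration by parts is self-contained.

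\emph{Step 2: compare the transforms.} By the convolution theorem,
\[
\widehat{\ell_n\ast\ell_m}(p)=\frac{(p-\tfrac12)^{n+m}}{(p+\tfrac12)^{n+m+2}}.
\]
On the other side, using Step 1,
\[
\widehat{\ell_{n+m}}(p)-\widehat{\ell_{n+m+1}}(p)=\frac{(p-\tfrac12)^{n+m}}{(p+\tfrac12)^{n+m+1}}\Bigl(1-\frac{p-\tfrac12}{p+\tfrac12}\Bigr)=\frac{(p-\tfrac12)^{n+m}}{(p+\tfrac12)^{n+m+2}},
\]
since $1-\frac{p-1/2}{p+1/2}=\frac{1}{p+1/2}$. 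The two transforms agree.

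\emph{Step 3: conclude.} Both sides are continuous functions on $[0,\infty)$ with equal Laplace transforms on a right half-plane. By Lerch's uniqueness theorem they coincide for all $t\ge 0$.

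The only step needing any care is the vanishing of the boundary terms in Step 1, and this is immediate from the factor $t^n$ in $e^{-t}t^n$.
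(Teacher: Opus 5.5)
Your proof is correct and complete. The formula $\widehat{\ell_n}(p)=(p-\tfrac12)^n/(p+\tfrac12)^{n+1}$ is right; the boundary terms vanish as you say, and in fact $\operatorname{Re}q>0$ already suffices. The algebra in Step 2 checks out, and Lerch's theorem closes the argument.

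The paper gives no proof of its own; it only cites Rainville. The classical form of the identity there concerns the polynomials, $\int_0^t L_n(s)L_m(t-s)\,ds=L_{n+m}(t)-L_{n+m+1}(t)$. The statement for $\ell_n=e^{-t/2}L_n$ then follows because $e^{-s/2}e^{-(t-s)/2}=e^{-t/2}$ factors out of the convolution integral.

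Your route is self-contained and works directly with the functions $\ell_n$, so this transfer step is not needed. It also shows why the identity holds. With $w=(p-\tfrac12)/(p+\tfrac12)$, the Laplace transform sends $\ell_n$ to $w^n/(p+\tfrac12)$, and the lemma reduces to the one-line identity $1-w=1/(p+\tfrac12)$. The inverse of $p\mapsto w$ is exactly the function $q(z)=\frac{1+z}{2(1-z)}$ in the proof of Theorem~\ref{main2}. So your computation is the same change of variables that the paper later reaches through the generating function of the $\ell_n$.
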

\begin{theorem}\label{main2}
Let $a$ be a nonzero real number and define $\psi(z)=\left(\frac{1-z}{1+z}\right)^{ia}.$
Then $-2^{ia}\Gamma(ia)H_{\psi}$ is  unitarily equivalent  to $\boldsymbol{H}_a.$
In this case, $H_{\psi}$ is a noncompact quasinilpotent Hankel operator.
\end{theorem}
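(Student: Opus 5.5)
The plan is to use the Laguerre basis $\{\ell_n\}$ of $L^2(\mathbb R_+)$, which is carried to $\{z^n\}$ by a unitary $U:L^2(\mathbb R_+)\to H^2(\mathbb T)$. I will compute the matrix of $\boldsymbol{H}_a$ in this basis and show it is the Hankel matrix \eqref{THm} of the symbol $-2^{ia}\Gamma(ia)\psi$.

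The matrix entries are
\[
\langle \boldsymbol{H}_a\ell_m,\ell_n\rangle=\int_0^\infty\!\!\int_0^\infty k(x+y)\ell_m(y)\overline{\ell_n(x)}\,dy\,dx .
\]
Set $t=x+y$; the inner integral over $x+y=t$ is exactly $(\ell_n\ast\ell_m)(t)$, since the $\ell_n$ are real. The previous Lemma then gives
\[
\langle \boldsymbol{H}_a\ell_m,\ell_n\rangle=\int_0^\infty t^{-1+ia}\bigl(\ell_{n+m}(t)-\ell_{n+m+1}(t)\bigr)\,dt=:\alpha_{n+m}.
\]
This depends only on $n+m$, so the matrix is Hankel and $U\boldsymbol{H}_aU^{-1}=H_f$ with $f_{j+1}=\alpha_j$. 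The integrals converge absolutely for $a\neq0$: near $0$, $\ell_j-\ell_{j+1}=O(t)$ because $\ell_j(0)=1$ for all $j$, and $e^{-t/2}$ handles infinity. Boundedness of $\boldsymbol{H}_a$ comes from Theorem~\ref{main1}. One can also avoid the interchange of integrals for general $m,n$ by first checking the identity for finite combinations of the $\ell_j$ and then using continuity.

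The main computation, and the obstacle I expect, is identifying the $\alpha_j$ with the Fourier coefficients of $\psi$. For this I use the generating function $\sum_j \ell_j(t)w^j=(1-w)^{-1}\exp\bigl(-\tfrac t2\tfrac{1+w}{1-w}\bigr)$ for $|w|<1$. It gives
\[
\sum_j(\ell_j-\ell_{j+1})w^j=\Bigl(1-\tfrac1w\Bigr)\sum_j\ell_jw^j+\tfrac1w\,\ell_0 .
\]
Multiplying by $t^{-1+ia}$ and integrating with $\int_0^\infty t^{-1+ia}e^{-\lambda t}\,dt=\Gamma(ia)\lambda^{-ia}$ for $\operatorname{Re}\lambda>0$ yields
\[
\sum_j\alpha_jw^j=\frac{\Gamma(ia)}{w}\Bigl[-\bigl(\tfrac12\tfrac{1+w}{1-w}\bigr)^{-ia}+2^{ia}\Bigr]=-\frac{2^{ia}\Gamma(ia)}{w}\Bigl[\Bigl(\tfrac{1-w}{1+w}\Bigr)^{ia}-1\Bigr].
\]
The bracket vanishes at $w=0$, so the right side is analytic in the disc. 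Since $\psi$ is a bounded function with $\sum_{j\ge0}\widehat\psi(j+1)w^{j}=(\psi(w)-\psi(0))/w$, we get $\alpha_j=-2^{ia}\Gamma(ia)\,\widehat\psi(j+1)$. The term-by-term integration is justified for $|w|<1$ by dominated convergence, using the $O(t)$ behaviour at $0$ together with $|\ell_j|\le1$ (or $\ell_j=O(e^{-ct})$ uniformly on compacts). Some care is also needed with the principal branch of $(\cdot)^{-ia}$ on the right half plane, where $\tfrac{1+w}{1-w}$ lives, so that the branches of $\psi$ and the $\Gamma$-integral match. Since the Hankel operator depends only on the positive Fourier coefficients, this shows $-2^{ia}\Gamma(ia)H_\psi=U\boldsymbol{H}_aU^{-1}$.

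Finally, the conclusion follows from unitary equivalence. Take $\phi$ as in Theorem~\ref{main1} with $c=1$; it is a nonzero function in $C_0(\mathbb R)\cap L^1(\mathbb R)$ and satisfies $\boldsymbol{H}_a=\mathcal M^{-1}W_{\phi,a}\mathcal M$. By Proposition~\ref{w}, $\boldsymbol{H}_a$ is noncompact and quasinilpotent. Because $-2^{ia}\Gamma(ia)\neq0$ and unitary equivalence preserves compactness and spectrum, $H_\psi$ is a noncompact quasinilpotent Hankel operator.
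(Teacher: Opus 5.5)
You follow the paper's route (Laguerre basis, the convolution lemma giving the Hankel entries $\alpha_{n+m}$, the generating function, a Gamma-integral evaluation), and you make explicit the final appeal to Theorem \ref{main1} and Proposition \ref{w} that the paper leaves implicit. One step, however, is false as written. The identity $\int_0^\infty t^{-1+ia}e^{-\lambda t}\,dt=\Gamma(ia)\lambda^{-ia}$ does not hold as an integral. Since $|t^{-1+ia}|=t^{-1}$, the integral diverges at $0$: already $\int_\varepsilon^1 t^{-1+ia}\,dt=(1-\varepsilon^{ia})/(ia)$ has no limit as $\varepsilon\to0$. The value $\Gamma(ia)$ exists only by analytic continuation. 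So you may not split the convergent integral $\int_0^\infty t^{-1+ia}\bigl(e^{-t/2}-e^{-qt}\bigr)\,dt$, with $q=\frac{1+w}{2(1-w)}$, into two such pieces.

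The paper handles this by integrating by parts first. The boundary term $\frac{t^{ia}}{ia}\bigl(e^{-t/2}-e^{-qt}\bigr)$ vanishes at both ends, because the bracket is $O(t)$ at $0$, $|t^{ia}|=1$, and $\operatorname{Re}q>0$. This leaves $\frac{1}{ia}\int_0^\infty t^{ia}\bigl(\tfrac12e^{-t/2}-qe^{-qt}\bigr)\,dt=\frac{\Gamma(1+ia)}{ia}\bigl(2^{ia}-q^{-ia}\bigr)$, where each piece converges absolutely. Alternatively, prove $\int_0^\infty t^{s-1}(e^{-t/2}-e^{-qt})\,dt=\Gamma(s)(2^{s}-q^{-s})$ for $\operatorname{Re}s>0$ and continue analytically to $s=ia$. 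The left side stays analytic on $\operatorname{Re}s>-1$ thanks to the $O(t)$ cancellation. Either way you get exactly the formula you wrote, so the rest of your argument (branches, $\psi(0)=1$, reading off $\widehat\psi(j+1)$, and the final deduction) goes through.

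A smaller point: your domination argument for integrating the generating series term by term fails at infinity. The bound $|\ell_j|\le1$ only gives the non-integrable majorant $2t^{-1}$ there. A uniform $L^1$ bound is available instead. By the lemma with $m=0$, $\ell_j-\ell_{j+1}=\ell_j\ast\ell_0$, so $|\ell_j-\ell_{j+1}|\le t$. By Cauchy--Schwarz, $\int_1^\infty t^{-1}|\ell_j-\ell_{j+1}|\,dt\le\|\ell_j-\ell_{j+1}\|_2=\sqrt2$. Hence $\int_0^\infty t^{-1}|\ell_j-\ell_{j+1}|\,dt\le1+\sqrt2$ for every $j$, and Fubini applies for $|w|<1$. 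The paper does not address this interchange at all.
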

\begin{proof}
The operator $\boldsymbol{H}_a$ has the following matrix representation with respect to the orthogonal basis $\{\ell_n\}$ of $L^2(\mathbb R_{+}).$
For $n,m\geqslant0,$ we have
\begin{equation}
  \begin{aligned}
  \langle \boldsymbol{H}_a\ell_n,\ell_m\rangle
&=\int_{\mathbb R_{+}} \int_{\mathbb R_{+}}(x+y)^{-1+ia}\ell_n(x)\ell_m(y)dxdy\\
&=\int_{\mathbb R_{+}}t^{-1+ia}(\ell_n\ast \ell_m)(t)dt\\
&=\int_{\mathbb R_{+}}t^{-1+ia}\left(\ell_{n+m}(t)-\ell_{n+m+1}(t)\right)dt.
  \end{aligned}
\end{equation}
Let $\alpha_n=\int_{\mathbb R_{+}}t^{-1+ia}\left(\ell_{n}(t)-\ell_{n+1}(t)\right)dt.$
Recall the generating function for $\ell_n$ (see \cite[(5.1.9)]{Szego1975}):
\begin{align}
\sum_{n=0}^{\infty}\ell_n(t)z^{n}=\frac{e^{-t/2}}{1-z}\exp\left(-\frac{tz}{1-z}\right).
\end{align}
A direct calculation gives
\begin{equation}
\begin{aligned}
  \sum_{n=0}^{\infty}\left(\ell_{n}(t)-\ell_{n+1}(t)\right)z^n
&=\sum_{n=0}^{\infty}\ell_{n}(t)z^n-\frac{1}{z}(\sum_{n=0}^{\infty}\ell_{n}(t)z^n-\ell_0(t))\\
&=(1-\frac{1}{z})\sum_{n=0}^{\infty}\ell_{n}(t)z^n+\frac{1}{z}e^{-t/2}\\
&=\frac{z-1}{z}\cdot\frac{e^{-t/2}}{1-z}\exp\left(-\frac{tz}{1-z}\right)+\frac{1}{z}e^{-t/2}\\
&=\frac{1}{z}\left(e^{-t/2}-\exp\left(-\frac{(z+1)t}{2(1-z)}\right)\right).
\end{aligned}
\end{equation}
Let $q(z)=\frac{1+z}{2(1-z)}.$ We have $\operatorname{Re}q(z)=\frac{1-|z|^2}{2|1-z|^2}>0.$
\begin{equation}
  \begin{aligned}
    \psi_1(z)\triangleq z\sum_{n=0}^{\infty}\alpha_nz^n
    &=\int_{\mathbb{R_{+}}}t^{-1+ia}(e^{-t/2}-e^{-q(z)t})dt\\
    &=\frac{1}{ia}\int_{\mathbb{R_{+}}}t^{ia}\left(\frac{1}{2}e^{-t/2}-q(z)e^{-q(z)t}\right)dt\\
    &=\frac{1}{ia}\left(2^{ia}\Gamma(ia+1)-q(z)^{-ia}\Gamma(ia+1)\right)\\
    &=\frac{\Gamma(ia+1)}{ia}(2^{ia}-q(z)^{-ia})\\
    &=\Gamma(ia)\left(2^{ia}-\left(\frac{1+z}{2(1-z)}\right)^{-ia}\right)\\
    &=2^{ia}\Gamma(ia)\left(1-\left(\frac{1+z}{1-z}\right)^{-ia}\right).
  \end{aligned}
\end{equation}
Hence, $H_{\psi_1}=-2^{ia}\Gamma(ia)H_{\psi}.$
\end{proof}

\noindent\textbf{AI Disclosure.}
The weighted composition operator in Proposition \ref{w} was identified with the assistance of AI tools. 
After formulating the problem and interacting with AI tools over several rounds, 
the authors found that the weighted translation operators are noncompact and quasinilpotent. 
The authors subsequently verified the proofs and confirmed that these operators are unitarily 
equivalent to a class of integral Hankel operators via the Mellin transform.

\end{document}